\titleformat*{\section}{\scshape\filcenter}
\newcommand{\Z}{\mathbb{Z}}
\renewcommand{\b}{\mathfrak{b}}
\renewcommand{\d}{\mathfrak{d}}
\renewcommand{\O}{\mathcal{O}}
\renewcommand{\P}{\mathbb{P}}
\DeclareMathOperator{\mult}{mult}
\DeclareMathOperator{\NS}{NS}
\DeclareMathOperator{\Bl}{Bl}
\DeclareMathOperator{\irr}{irr}
\renewcommand{\hat}[1]{\widehat{#1}}
\def\thm@space@setup{
	\thm@preskip=0.1in
	\thm@postskip=0in 
}
\theoremstyle{plain}
\newtheorem{thm}{Theorem}[section]
\newtheorem{lem}[thm]{Lemma}
\newtheorem{prop}[thm]{Proposition}
\newtheorem{cor}[thm]{Corollary}
\theoremstyle{definition}
\newtheorem{remark}[thm]{Remark}
\newtheorem{set-up}[thm]{Set-Up}
\title{\Large \textbf{DEGREE OF IRRATIONALITY OF VERY GENERAL ABELIAN SURFACES}}
\author{Nathan Chen}
\date{}
\begin{document}

\maketitle

\section{Introduction}

Given a projective variety $X$ of dimension $n$ which is not rational, one can try to quantify how far it is from being rational. When $n = 1$, the natural invariant is the \textit{gonality} of a curve $C$, defined to be the smallest degree of a branched covering $C' \rightarrow \P^{1}$ (where $C'$ is the normalization of $C$). One generalization of gonality to higher dimensions is the \textit{degree of irrationality}, defined as:
\[ \irr(X) = \min \{ \delta > 0 \mid \exists \text{ a degree $\delta$ rational dominant map } X \dashrightarrow \P^{n} \}. \]
Recently, there has been significant progress in understanding the case of hypersurfaces of large degree (cf. \cite{Bastianelli17}, \cite{BCD14}, \cite{BDELU17}). The history behind the development of these ideas is described in \cite{BDELU17}. The results of \cite{Bastianelli17}, \cite{BCD14}, \cite{BDELU17} depend on the positivity of the canonical bundles of the varieties in question, so it is interesting to consider what happens in the $K_{X}$-trivial case. Our purpose here is to prove the somewhat surprising fact that the degree of irrationality of a very general polarized abelian surface is uniformly bounded above, independently of the degree of the polarization.

To be precise, let $A = A_{d}$ be an abelian surface carrying a polarization $L = L_{d}$ of type $(1, d)$ and assume that $\NS(A) = \Z[L]$. An argument of Stapleton \cite{Stapleton17} showed that there is a constant $C$ such that
\[ \irr(A) \leq C \cdot \sqrt{d} \]
for $d \gg 0$, and it was conjectured in \cite{BDELU17} that equality holds asymptotically. Our main result shows that this is maximally false:

\begin{thm}\label{thm:MT}
For an abelian surface $A = A_{d}$ with Picard number $\rho = 1$, one has
\[ \irr(A) \leq 4. \]
\end{thm}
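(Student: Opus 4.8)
The plan is to realize the required map as a projection of $A$ in the embedding furnished by its own polarization, using the Jacobian picture as a numerical guide. When $A$ is principally polarized it is $\Jac(C)$ for a genus-$2$ curve $C$, and the identification $\mathrm{Sym}^2 C \cong \mathrm{Bl}_{pt}\,A$ composed with $\mathrm{Sym}^2 C \to \mathrm{Sym}^2 \P^1 = \P^2$ (the symmetric square of the hyperelliptic pencil) is a degree-$4$ map; this both fixes the target number $4$ and disposes of the smallest cases. The honest difficulty is the genuinely non-principal case $\rho=1$, $d\gg 0$, where no auxiliary principal polarization, product structure, or low-genus curve is available — indeed this is exactly why the naive routes fail, since any isogeny $A\to\Jac(C)$ contributes its degree $d$ and the degree-$4$ maps on products are defined by an auxiliary $(1,1)$-class that does not survive deformation into the $\rho=1$ locus. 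So I would look for an \emph{intrinsic} construction on $A$ itself.

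For $d$ large the system $|L|$ embeds $A$ as a surface of degree $L^2=2d$ in $\P^{d-1}=\P H^0(A,L)^{*}$, and the problem becomes: find a $3$-dimensional subspace $V\subset H^0(A,L)$ whose base scheme $Z$ is zero-dimensional of length $2d-4$. The induced net $\phi_V\colon A\dashrightarrow\P^2$ then has degree equal to $(L\cdot L)-\mathrm{length}(Z)=2d-(2d-4)=4$, computed as the residual intersection of two general members of $V$. Equivalently, one must exhibit a special $(d-4)$-plane in $\P^{d-1}$ that is $(2d-4)$-secant to $A$. The point of contact with the geometry of $A$ is adjunction: for a smooth $C\in|L|$ one has $L|_C=K_C$ of degree $2d=2g(C)-2$, so $C$ sits in its canonical space and the sought secant plane corresponds, via geometric Riemann–Roch, to a special linear series on $C$. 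I would produce such special series from the abelian structure itself — the degree-$d$ addition map $\mathrm{Sym}^2 C\to A$ equips the curves $C\in|L|$ with correspondences that force linear series far beyond Brill–Noether general, and I would use the group law together with the theta-group action on $H^0(A,L)$ to pin down an explicit $Z$ in the required special position.

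The main obstacle, and the heart of the argument, is precisely the existence of this subscheme $Z$ on a very general $A$ together with the verification that the resulting net is \emph{honest}: that $Z$ is the \emph{entire} base scheme (no fixed curve is swallowed), that $\phi_V$ is dominant and generically finite onto $\P^2$, and that it is separable of degree exactly $4$ rather than a smaller degree composed with a multiple cover. Here the hypothesis $\NS(A)=\Z[L]$ is essential and would be used twice: to rule out contracted or excess curves (any such component would introduce a class not proportional to $L$), and to force the linear-series computation on $C$ to behave uniformly. Finally, since the construction naturally lives over the generic point of the moduli of $(1,d)$-polarized abelian surfaces, I would conclude for the \emph{very general} $A$ by spreading the degree-$4$ map out over a family and specializing, with the finitely many small values of $d$ treated separately (for instance through $|2L|$ or the explicit principal models).
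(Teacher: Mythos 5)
Your proposal correctly reduces the problem to finding a net $V \subset H^0(A,L)$ whose base scheme has length $2d-4$, but the existence of such a $V$ --- which you yourself flag as the heart of the argument --- is never established, and there is a concrete numerical reason to believe it cannot be produced by the methods you indicate. If you try to create the base scheme by imposing multiplicity $m$ at a point of $A$, you spend $\binom{m+1}{2} = m(m+1)/2$ conditions on the $d$-dimensional space $H^0(A,L)$ but lower the self-intersection of the moving part by only $m^2$; with at most $d-3$ conditions to spend (to keep $\dim V \geq 3$), the total drop you can achieve is at most $2(d-3) = 2d-6 < 2d-4$. So base conditions imposed at points of $|L|$ can never close the gap, no matter how the multiplicities are distributed, and the appeal to special linear series on curves $C \in |L|$ via geometric Riemann--Roch, the group law, or the theta-group does not supply any alternative mechanism --- it only renames the missing object. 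The entire content of the theorem lives in this step.

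The paper's construction is precisely a device for beating this count: after translating $L$ to be symmetric, one works with the space $H^0(A,\O_A(2L))^{+}$ of \emph{even} sections, of dimension $2d+2$, and imposes vanishing to order $2a_i$ at the sixteen two-torsion points with $\sum a_i^2 = 2d-2$. Because an even section automatically vanishes to even order at a two-torsion point, multiplicity $2a_i$ there costs only $a_i^2$ conditions while dropping $(2L)^2 = 8d$ by $(2a_i)^2 = 4a_i^2$ --- a fourfold gain that brings the residual self-intersection down to $8$. The remaining factor of $2$ then requires real work: the map factors through $A/\iota$, so its degree is even; a separate argument (descending the putative fixed curve to the Kummer K3 and applying Riemann--Roch) shows the same bound $\deg\varphi\cdot\deg S\le 8$ persists when the linear system acquires a fixed component; and a short classification of the possible pairs $(\deg\varphi,\deg S)$, together with projection from a double point of $S$, extracts a degree-$4$ map onto $\P^2$. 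None of these ingredients --- the symmetry trick, the fixed-component analysis, the parity and classification step --- appears in your outline, and the concluding ``spread out and specialize'' paragraph cannot substitute for them. Your degree-$4$ map $\mathrm{Sym}^2 C \to \P^2$ in the principally polarized case is correct, but that is not the case at issue.
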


We conjecture that in general equality holds. However, as far as we can see, the conjecture of \cite{BDELU17} for polarized K3 surfaces $(S_{d}, B_{d})$ of genus $d$ - namely, that there exist constants $C_{1}, C_{2}$ such that $C_{1} \cdot \sqrt{d} \leq \irr(S_{d}) \leq C_{2} \cdot \sqrt{d}$ for $d \gg 0$ - remains plausable.\footnote{In other words, $B_{d}$ is an ample line bundle on $S_{d}$ with $B_{d}^{2} = 2d - 2$.}

For an abelian variety $A$ of dimension $n$, it has been shown in \cite{AP92} that $\irr(A) \geq n+1$ (for $n = 2$, one can also see this via Lemma~\ref{lem:NMT}). When $n = 2$, Yoshihara proved that $\irr(A) = 3$ for abelian surfaces $A$ containing a smooth curve of genus 3 (cf. \cite{Yoshihara96}). On a related note, Voisin \cite{Voisin18} showed that the covering gonality of a very general abelian variety $A$ of dimension $n$ is bounded from below by $f(n)$, where $f(n)$ grows like $\log(n)$, and this lower bound was subsequently improved to $\lceil \frac{1}{2} n + 1 \rceil$ by Martin \cite{Martin19}.\footnote{Covering gonality is defined as the minimum integer $c>0$ such that given a general point $x \in A$, there exists a curve $C$ passing through $x$ with gonality $c$.}

In the proof of our theorem, assuming as we may that $L$ is symmetric, we consider the space $H^{0}(A, \O_{A}(2L))^{+}$ of even sections of $\O_{A}(2L)$. By imposing suitable multiplicities at the two-torsion points of $A$, we construct a subspace $V \subset H^{0}(A, \O_{A}(2L))^{+}$ which numerically should define a rational map from $A$ to a surface $S \subset \P^{N}$. Using bounds on the degree of the map and the degree of $S$, as well as projection from linear subspaces, we construct a degree 4 rational covering $A \dashrightarrow \P^{2}$. The main difficulty is to deal with the possibility that $\P_{\text{sub}}(V)$ has a fixed component; this approach was inspired in part by the work of Bauer in \cite{Bauer94}, \cite{Bauer99}.

\textbf{Acknowledgments.} I would like to thank my advisor Robert Lazarsfeld for suggesting the conjecture and for his encouragement and guidance throughout the formulation of the results in this paper. I would also like to thank Frederik Benirschke, Mohamed El Alami, Fran\c{c}ois Greer, Samuel Grushevsky, Ljudmila Kamenova, Yoon-Joo Kim, Radu Laza, John Sheridan, and Ruijie Yang for engaging in valuable discussions.

\section{Set-up}\label{SU}

Let $A = A_{d}$ be an abelian surface with $\rho(A) = 1$. Assume $\NS(A) \cong \Z [L]$ where $L$ is a polarization of type $(1, d)$ for some fixed $d \geq 1$, so that $L^{2} = 2d$ and $h^{0}(L) = d$. Let
\[ \iota: A \rightarrow A, \quad x \mapsto -x \]
be the inverse morphism, and let $Z = \{ p_{1}, \ldots, p_{16} \}$ be the set of two-torsion points of $A$ (fixed points of $\iota$). We may assume that $L$ is symmetric -- that is, $\iota^{\ast}\O_{A}(L) \cong \O_{A}(L)$ -- by replacing $L$ with a suitable translate. In particular, the cyclic group of order two acts on $H^{0}(A, \O_{A}(2L))$. The space of \textit{even} sections $H^{0}(A, \O_{A}(2L))^{+}$ of the line bundle $\O_{A}(2L)$ (sections $s$ with the property that $\iota^{\ast} s = s$) has dimension
\[ h^{0}(A, 2L)^{+} = 2d + 2 \]
(see \cite[Corollary 4.6.6]{BL04}). An even section of $\O_{A}(2L)$ vanishes to even order at any two-torsion point, so we need to impose at most
\[ 1 + 3 + \cdots + (2m-1) = m^{2} \]
conditions for every even section to vanish to order $2m$ at \underline{any} fixed point $p \in Z$ (see \cite{Bauer94} for more details).

Fix any integer solutions $a_{1}, \ldots, a_{16} \geq 0$ to the equation
\[ \sum_{i=1}^{16} a_{i}^{2} = 2d-2, \]
with $a_{15} = 0 = a_{16}$.\footnote{This assumption will be useful in Corollary~\ref{cor:SQ}. For larger values of $d$, note that there are many solutions.} This is possible by Lagrange's four-squares theorem. Let $V \subset H^{0}(A, \O_{A}(2L))^{+}$ be the space of even sections vanishing to order at least $2a_{i}$ at each point $p_{i}$, such that
\[ \dim V \geq 2d + 2 - \sum_{i=1}^{16} a_{i}^{2} \geq 4. \]
Let $\d = \P_{\text{sub}}(V) \subseteq \abs{2L}^{+}$ be the corresponding linear system of divisors, whose dimension is $N \coloneqq \dim \d \geq 3$. Write
\[ d_{i} \coloneqq \mult_{p_{i}} D \]
for a general divisor $D \in \d$, so that $d_{i} \geq 2a_{i}$.

\begin{remark}\label{rem:K}
From \cite[Section 4.8]{BL04}, it follows that sections of $V$ are pulled back from the singular Kummer surface $A/\iota$, so any divisor $D \in \d$ is symmetric, i.e. $\iota(D) = D$.
\end{remark}

Let $\varphi: A \dashrightarrow \P^{N}$ be the rational map given by the linear system $\d$ above, and write $S \coloneqq \overline{\Im(\varphi)}$ for the image of $\varphi$. Regardless of whether or not $\d$ has a fixed component, we find that:

\begin{prop}\label{prop:S}
$S \subset \P^{N}$ is an irreducible and nondegenerate surface.
\end{prop}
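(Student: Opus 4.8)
The plan is to check the three assertions packaged in the word ``surface'' separately: irreducibility, nondegeneracy, and that $\dim S = 2$. The first two are formal. Since $A$ is irreducible and $\varphi$ is a rational map, the image of the dense open locus $U \subseteq A$ on which $\varphi$ is regular is irreducible, hence so is its closure $S$. For nondegeneracy I would fix a basis $s_0, \dots, s_N$ of $V$ and write $\varphi = [s_0 : \cdots : s_N]$; if $S$ lay in a hyperplane $\{ \sum \lambda_i x_i = 0 \}$, then $\sum \lambda_i s_i$ would vanish on $U$, hence identically, contradicting the linear independence of a basis. So the whole content is the dimension count.

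Because $\dim A = 2$ we have $\dim S \le 2$, and because $\dim V \ge 4$ the defining sections are not all proportional, so $S$ is not a point; thus $\dim S \in \{1, 2\}$ and everything comes down to ruling out that $\varphi$ contracts $A$ onto a curve. To do this I would pass to the moving part: write $\d = \abs{M} + F$ with $F$ the fixed divisor and $\abs{M}$ the mobile system (no fixed component), which defines the same map $\varphi$ and satisfies $\dim \abs{M} = N \ge 3$. Since the system genuinely moves, a general $M$ is a nonzero effective divisor, so $M \cdot L > 0$; as $\NS(A) = \Z[L]$, this forces $[M] = c L$ with $c \ge 1$. Two general members $M_1, M_2 \in \abs{M}$ share no common component, so by Bertini they meet in exactly $M^2 = c^2 L^2 = 2 d c^2 > 0$ points. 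Were $\dim S = 1$, the general members of $\abs{M}$ would be pullbacks of distinct points of the image curve, hence disjoint, forcing $M^2 = 0$ --- a contradiction. Therefore $\dim S = 2$.

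The one delicate step is this last dichotomy, and its success rests entirely on the standing hypothesis $\rho(A) = 1$: the lattice $\Z[L]$ with $L^2 = 2d$ contains no nonzero class of self-intersection zero, so $A$ admits no fibration onto a curve and $M^2$ is forced to be strictly positive. I would emphasize that the possible fixed component $F$ --- whose presence is the principal obstacle elsewhere in the paper --- is harmless here, since replacing $\d$ by its numerical moving class $[M] = cL$ with $c \ge 1$ makes the intersection computation independent of $F$ altogether.
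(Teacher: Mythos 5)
Your reductions of irreducibility and nondegeneracy are fine, but the dimension count --- the actual content of the proposition --- has a genuine gap at the step ``the general members of $\abs{M}$ would be pullbacks of distinct points of the image curve, hence disjoint, forcing $M^2 = 0$.'' Removing the fixed component does not make $\abs{M}$ base point free: it can still have isolated base points (in this construction one expects them at the two-torsion points), and every member of $\abs{M}$ passes through them. So even if the image were a curve, two general members would meet along that finite base locus, and $M^2 = M_1 \cdot M_2$ could well be strictly positive; the pencil of lines through a point of $\P^2$ is the standard counterexample to ``image a curve $\Rightarrow M^2 = 0$.'' A sanity check that something must be wrong: your argument never uses $N \geq 3$, so if it were valid it would show that \emph{no} linear system on $A$ maps onto a curve --- but any pencil inside $\abs{2L}$ does exactly that. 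Likewise, the closing claim that $\rho(A) = 1$ forbids a fibration onto a curve applies to morphisms (where the fiber class genuinely has square zero), not to rational maps, which only become morphisms after blowing up, at which point the Picard number is no longer $1$.

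The hypothesis $N \geq 3$ is precisely what the paper's proof exploits. If the image were a curve $C$, it would be irreducible and nondegenerate in $\P^N$, hence of degree at least $N \geq 3$; a general hyperplane then meets $C$ in at least three distinct points, and its pullback is a divisor of $\d$ with at least three irreducible components (one over each point, plus possibly the fixed part). But since $\NS(A) = \Z[L]$, every irreducible curve on $A$ is algebraically equivalent to $kL$ with $k \geq 1$, so a divisor algebraically equivalent to $2L$ carries at most two irreducible components. That is the contradiction: what rules out a curve image is a count of components, not of intersection points, and your argument needs to be replaced by (or supplemented with) this step.
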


\begin{proof}

Suppose for the sake of contradiction that $\overline{\Im(\varphi)}$ is a nondegenerate curve $C$. Then $\deg C \geq 3$ since $N \geq 3$, and a hyperplane section of $C \subset \P^{N}$ pulls back to a divisor with at least three irreducible components. This contradicts the fact that any divisor $D (\sim_{lin} 2L) \in \d$ has at most two irreducible components since $\NS(A) = \Z[L]$. So the image of $\varphi$ is a surface. \qedhere

\end{proof}

\begin{lem}\label{lem:S}
Let $\varphi: X \dashrightarrow \P^{n}$ be a rational map from a surface $X$ to a projective space of dimension $n \geq 2$, and suppose that its image $S \coloneqq \overline{\Im(\varphi)} \subset \P^{n}$ has dimension 2. Let $\d$ be the linear system corresponding to $\varphi$ (assuming $\d$ has no base components). Then for any $D \in \d$,
\[ \deg \varphi \cdot \deg S \leq D^{2}. \]
\end{lem}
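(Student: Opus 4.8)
The plan is to reduce everything to intersection theory on a resolution of the indeterminacy of $\varphi$, where the gap between $D^2$ and $\deg\varphi\cdot\deg S$ will be accounted for by the base points of $\d$. First I would eliminate indeterminacy: since $\d$ has no base components, its base locus is a finite set of points, so by repeatedly blowing up these points we obtain a birational morphism $\mu\colon \tilde{X} \to X$ (a composition of point blow-ups) for which the composite $\tilde\varphi \coloneqq \varphi\circ\mu\colon \tilde{X}\to \P^{n}$ is a genuine morphism. Writing $\mu^{\ast}\d = \abs{\tilde{D}} + F$, the moving part $\tilde{D}$ is base-point-free and equals $\tilde\varphi^{\ast}H$ for a hyperplane $H\subset\P^{n}$, while the fixed part $F$ is effective and supported on the $\mu$-exceptional locus, precisely because $\d$ had no fixed divisorial component on $X$.

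Next I would compute $\tilde{D}^{2}$ geometrically. Since $\mu$ is birational we have $\deg\tilde\varphi = \deg\varphi$, and $\tilde\varphi$ maps $\tilde{X}$ generically $\deg\varphi$-to-one onto $S$, so $\tilde\varphi_{\ast}[\tilde{X}] = \deg\varphi\cdot[S]$ as cycles in $\P^{n}$. Taking two general hyperplanes and applying the projection formula for the proper morphism $\tilde\varphi$ gives
\[ \tilde{D}^{2} = \tilde\varphi^{\ast}H\cdot\tilde\varphi^{\ast}H = \deg\varphi\cdot(H^{2}\cdot S) = \deg\varphi\cdot\deg S, \]
since $H^{2}$ is a general codimension-two linear subspace meeting the surface $S$ in exactly $\deg S$ reduced points. (This is where the hypotheses $n\geq 2$ and $\dim S = 2$ are used.)

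It then remains to compare $D^{2}$ with $\tilde{D}^{2}$. Because $\mu$ is birational, $D^{2} = (\mu^{\ast}D)^{2}$. The cleanest way to handle the correction is again the projection formula: as $F$ is $\mu$-exceptional we have $\mu_{\ast}F = 0$, hence $\mu^{\ast}D\cdot F = D\cdot\mu_{\ast}F = 0$. Expanding $\mu^{\ast}D = \tilde{D} + F$ therefore yields
\[ D^{2} = \mu^{\ast}D\cdot\tilde{D} = (\tilde{D} + F)\cdot\tilde{D} = \tilde{D}^{2} + \tilde{D}\cdot F. \]
Finally $\tilde{D}$ is nef, being base-point-free, and $F$ is effective, so $\tilde{D}\cdot F \geq 0$, and we conclude $D^{2} = \tilde{D}^{2} + \tilde{D}\cdot F \geq \tilde{D}^{2} = \deg\varphi\cdot\deg S$.

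The main thing to get right is the treatment of the indeterminacy of $\varphi$: the inequality (rather than equality) in the statement is exactly the nonnegative contribution $\tilde{D}\cdot F$ of the base points. One must verify that $F$ is genuinely exceptional, which is forced by the hypothesis that $\d$ has no base components; if $\d$ were base-point-free we would simply have $\tilde{X} = X$, $F = 0$, and equality. I expect the only delicate point to be the bookkeeping in the elimination of indeterminacy — confirming that finitely many point blow-ups suffice on a surface and that the fixed part of the transformed system lands on the exceptional locus — but this is standard, and the intersection-theoretic core of the argument is the two displayed computations above.
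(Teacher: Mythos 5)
Your proof is correct and is exactly the standard argument the paper is gesturing at: the paper's entire proof is the one-line remark that the indeterminacy locus is finite, leaving the reader to carry out precisely the resolution $\mu\colon \tilde{X}\to X$, the projection-formula computation $(\tilde\varphi^{\ast}H)^{2}=\deg\varphi\cdot\deg S$, and the comparison $D^{2}=\tilde{D}^{2}+\tilde{D}\cdot F\geq\tilde{D}^{2}$ that you spell out. No substantive difference in approach; yours is simply the complete version.
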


\begin{proof}
The indeterminacy locus of $\varphi$ is a finite set. \qedhere
\end{proof}

\section{Degree bounds}

We now study the numerical properties of the linear series $\d$ constructed above. Keeping the notation as in $\S$\ref{SU}:

\begin{lem}\label{lem:NFC8}
If $\d$ has no fixed components, then
\[ \deg \varphi \cdot \deg S \leq 8. \]
\end{lem}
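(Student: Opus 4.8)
The plan is to resolve the indeterminacy of $\varphi$, compute $\deg \varphi \cdot \deg S$ as a self-intersection number on the resolution, and then extract the bound from the multiplicities that $\d$ is forced to have at the two-torsion points. The arithmetic is arranged so that these multiplicities consume almost all of $(2L)^2 = 8d$, leaving exactly $8$.

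Since $\d$ has no fixed components, its base locus is finite and contains each two-torsion point $p_i$ with $\mult_{p_i} D = d_i \geq 2a_i$ for general $D \in \d$. Let $\mu \colon \tilde{A} \to A$ be a composition of blow-ups resolving the indeterminacy of $\varphi$, so that $\tilde{\varphi} \coloneqq \varphi \circ \mu \colon \tilde{A} \to \P^N$ is a morphism; by Proposition~\ref{prop:S} its image is still the surface $S$, so $\tilde{\varphi}$ is generically finite of degree $\deg \varphi$ onto $S$. Working in the total-transform basis of exceptional classes $E_j$, normalized by $E_j \cdot E_k = -\delta_{jk}$ and $\mu^*(2L) \cdot E_j = 0$, the moving part of $\mu^* \d$ is
\[ \tilde{\varphi}^* \O_{\P^N}(1) = \mu^*(2L) - \sum_j m_j E_j, \qquad m_j \geq 0, \]
where $m_j$ is the multiplicity of $\d$ at the corresponding (possibly infinitely near) base point. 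Since this class is base-point free and $\tilde{\varphi}$ is generically finite onto the surface $S$, the projection formula gives
\[ \deg \varphi \cdot \deg S = \bigl(\tilde{\varphi}^* \O_{\P^N}(1)\bigr)^2 = (2L)^2 - \sum_j m_j^2 = 8d - \sum_j m_j^2. \]

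It then remains to bound $\sum_j m_j^2$ from below. The first blow-up over each two-torsion point $p_i$ contributes an exceptional class whose coefficient is exactly $m_j = \mult_{p_i} D = d_i$, and all remaining $m_j \geq 0$, so $\sum_j m_j^2 \geq \sum_{i=1}^{16} d_i^2$. Invoking $d_i \geq 2a_i$ together with the defining relation $\sum a_i^2 = 2d-2$ yields
\[ \sum_{i=1}^{16} d_i^2 \geq 4 \sum_{i=1}^{16} a_i^2 = 4(2d-2) = 8d - 8. \]
Combining the two displays gives $\deg \varphi \cdot \deg S \leq 8d - (8d-8) = 8$, as desired.

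The only delicate point is the bookkeeping of multiplicities under a tower of blow-ups: one must ensure that the square of the multiplicity at each $p_i$ genuinely appears in $\sum_j m_j^2$ and is not offset by contributions from infinitely near points. This is precisely what the orthonormal total-transform basis $\{E_j\}$ guarantees, since in it the exceptional part of any effective class has nonnegative coefficients. Alternatively, one may blow up only the sixteen points $p_1, \ldots, p_{16}$, note that the resulting linear system has self-intersection $8d - \sum_i d_i^2$ and only loses further under any additional blow-ups needed to clear the remaining base points, and then apply Lemma~\ref{lem:S} directly.
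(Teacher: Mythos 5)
Your proof is correct and is essentially the paper's argument: the paper blows up only the sixteen two-torsion points and invokes Lemma~\ref{lem:S} to absorb the remaining finite base locus into an inequality $\deg\varphi\cdot\deg S \le \hat{D}^{2} = 8d - \sum_i d_i^{2}$, which is exactly the ``alternative'' you mention at the end, while your main route performs a full resolution and gets equality before discarding the infinitely near contributions. In both versions the decisive input is identical: $d_i \ge 2a_i$ and $\sum a_i^{2} = 2d-2$, giving $\sum d_i^{2} \ge 8d-8$ and hence the bound $8$.
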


\begin{proof}
By applying Proposition~\ref{prop:S} and blowing-up $A$ along the collection of two-torsion points $Z$ to resolve some of the base points of $\d$, we arrive at the diagram
\begin{center}
\begin{tikzcd}
\hat{A} \arrow[r, phantom, "\coloneqq"] &[-0.25in] \Bl_{Z}A \arrow[d, swap, "\pi"] \arrow[dr, dashed, "\psi"] & &[-0.25in] \\
& A \arrow[r, swap, dashed, "\varphi"] & S \arrow[r, phantom, "\subset"] & \P^{N}
\end{tikzcd}
\end{center}
The linear system corresponding to $\psi$ has no fixed components, so its divisors are of the form
\[ \hat{D} \sim_{lin} \pi^{\ast}D - \sum_{i=1}^{16} d_{i} E_{i}, \]
where $\hat{D}$ denotes the strict transform of $D$. By Lemma~\ref{lem:S} applied to $\psi$,
\begin{equation}\label{eq:D}
\deg \varphi \cdot \deg S = \deg \psi \cdot \deg S \leq \hat{D}^{2} = 4L^{2} - \sum_{i=1}^{16} d_{i}^{2} \leq 4 \Bigg(2d - \sum_{i=1}^{16} a_{i}^{2} \Bigg) = 8. \qedhere
\end{equation}

\end{proof}

The main work is to treat the case when $\d$ has a fixed divisor $F \not= 0$. In this situation, we may write:
\[ D = F + M \in \d \quad \text{and} \quad \d = F + \b, \]
where $F$ and $M$ are the fixed and movable components of $\d$, respectively. By definition, $\dim \d = \dim \b$. Note that $D \sim_{lin} 2L$ implies $F, M \sim_{alg} L$ for all $M \in \b$. Choose a general divisor $M \in \b$ and write
\[ m_{i} \coloneqq \mult_{p_{i}}M \quad \text{and} \quad f_{i} \coloneqq \mult_{p_{i}}F, \]
so that $d_{i} = m_{i} + f_{i} \geq 2a_{i}$ for all $i$. We claim that $F$ must be symmetric as a divisor. If not, then
\[ \iota(M) + \iota(F) = \iota(D) = D = M + F \quad \text{for all} \quad D \in \d. \]
This implies that $M = \iota(F)$ and $F = \iota(M)$ for all $M \in \b$, which would mean that $M$ must also be fixed, leading to a contradiction. Hence, $F$ must be symmetric, and likewise for all $M \in \b$.

We first need an intermediate estimate:

\begin{prop}\label{prop:E}
Assume $\d$ has a fixed component $F \not= 0$. Keeping the notation as above,
\[ \sum_{i=1}^{16} m_{i}^{2} \geq 2d - 8. \]
\end{prop}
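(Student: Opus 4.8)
The plan is to transfer everything to the blow-up $\hat{A} = \Bl_{Z}A$ from Lemma~\ref{lem:NFC8} and to reduce the proposition to the single inequality $\hat{M}^{2} \leq 8$. Writing $\hat{F} = \pi^{\ast}F - \sum_{i} f_{i}E_{i}$, $\hat{M} = \pi^{\ast}M - \sum_{i} m_{i}E_{i}$ and $\hat{D} = \hat{F} + \hat{M} = \pi^{\ast}(2L) - \sum_{i} d_{i}E_{i}$, and using $F^{2} = M^{2} = F \cdot M = L^{2} = 2d$, one records
\[ \hat{F}^{2} = 2d - \sum_{i} f_{i}^{2}, \qquad \hat{F} \cdot \hat{M} = 2d - \sum_{i} f_{i}m_{i}, \qquad \hat{D}^{2} = 8d - \sum_{i} d_{i}^{2}. \]
In particular $\hat{M}^{2} = 2d - \sum_{i} m_{i}^{2}$, so the asserted bound $\sum_{i} m_{i}^{2} \geq 2d - 8$ is literally the statement $\hat{M}^{2} \leq 8$. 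First I would note that $\rho = 1$ forces $F$ and $M$ to be irreducible: an effective divisor algebraically equivalent to the primitive class $L$ cannot split into two nonzero effective pieces, since each piece would be a nonnegative integer multiple of $L$, and a nonzero effective divisor that is numerically trivial against the ample class $L$ cannot exist. Since the general $M \in \b$ shares no component with the fixed curve $F$, this legitimizes the local intersection estimates below.

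Next come the two easy inputs. From $d_{i} \geq 2a_{i}$ and $\sum_{i} a_{i}^{2} = 2d - 2$ we get $\sum_{i} d_{i}^{2} \geq 4\sum_{i} a_{i}^{2} = 8d - 8$, whence $\hat{D}^{2} = 8d - \sum_{i} d_{i}^{2} \leq 8$. Moreover, because $F$ and $M$ have no common component, $F \cdot M = 2d$ is a sum of nonnegative local intersection numbers, and the contribution at each two-torsion point $p_{i}$ is at least $f_{i}m_{i}$; hence $\sum_{i} f_{i}m_{i} \leq 2d$, i.e.\ $\hat{F} \cdot \hat{M} \geq 0$. Expanding $\hat{D}^{2} = \hat{F}^{2} + 2\,\hat{F}\cdot\hat{M} + \hat{M}^{2}$ then yields
\[ \hat{M}^{2} = \hat{D}^{2} - 2\,\hat{F}\cdot\hat{M} - \hat{F}^{2} \leq 8 - \hat{F}^{2}, \]
so the entire proposition reduces to the single inequality $\hat{F}^{2} \geq 0$, equivalently $\sum_{i} f_{i}^{2} \leq 2d = F^{2}$.

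This last reduction is where I expect the real difficulty, and it is precisely here that the symmetry of $F$ must enter. The naive adjunction bound on the irreducible curve $F$, namely $\sum_{i} \binom{f_{i}}{2} \leq p_{a}(F) = d+1$, only gives $\sum_{i} f_{i}^{2} \leq 2d + 2 + \sum_{i} f_{i}$, which is off by a lower-order term. To sharpen it I would use that $F$ is $\iota$-invariant and descend to the Kummer picture of Remark~\ref{rem:K}: the even sections pass to the desingularized Kummer K3 surface $\tilde{K}$, on which the exceptional curves $E_{i}$ become the sixteen $(-2)$-curves and on which the fixed part of a linear system can only consist of negative curves. Translating the adjunction relation on $\tilde{K}$ back through the branched double cover $\hat{A} \to \tilde{K}$, while carefully accounting for the ramification contributions along the $(-2)$-curves (together with the parity constraints that the theory of symmetric divisors imposes on the $f_{i}$), should pin $\sum_{i} f_{i}^{2}$ down to at most $2d$. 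Controlling the fixed symmetric curve in this way -- as opposed to the movable $M$, whose positivity $\hat{M}^{2} \geq 0$ is automatic -- is the crux; the main obstacle is getting the constant exactly right, since a naive count loses a small additive error. Once $\hat{F}^{2} \geq 0$ is established, the displayed inequality gives $\hat{M}^{2} \leq 8$ and hence $\sum_{i} m_{i}^{2} \geq 2d - 8$.
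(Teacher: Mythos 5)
Your reductions are correct as far as they go ($\hat{M}^{2} = 2d - \sum_{i} m_{i}^{2}$, $\hat{D}^{2} \leq 8$, $\hat{F}\cdot\hat{M} \geq 0$), but the step you defer to the end --- proving $\hat{F}^{2} \geq 0$, i.e.\ $\sum_{i} f_{i}^{2} \leq 2d$ --- is not a difficulty to be overcome: it is false. The paper shows, by descending $\hat{F}$ to an irreducible curve $\bar{F}$ on the smooth Kummer K3 surface and applying Riemann--Roch together with the rigidity $h^{0}(K, \O_{K}(\bar{F})) = 1$ (which holds precisely because $F$ is a fixed component, so $|\bar{F}|$ cannot contain a pencil), that $\bar{F}^{2} = -2$ and hence $\hat{F}^{2} = -4$, i.e.\ $\sum_{i} f_{i}^{2} = 2d + 4$. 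So the Kummer/adjunction program you sketch, carried out correctly, lands on the wrong side of the inequality you are aiming for: the fixed part is genuinely negative, exactly as one should expect for the rigid part of a linear system on a K3.

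Moreover, even after substituting the true value $\hat{F}^{2} = -4$, your chain $\hat{M}^{2} = \hat{D}^{2} - 2\hat{F}\cdot\hat{M} - \hat{F}^{2} \leq 8 - 0 + 4 = 12$ only gives $\sum_{i} m_{i}^{2} \geq 2d - 12$, which falls short of $2d - 8$; the estimate $\sum_{i} f_{i}m_{i} \leq F \cdot M = 2d$ is too crude. The paper instead uses the pointwise bound $f_{i}m_{i} \leq (d_{i}/2)^{2}$ (AM--GM, since $f_{i} + m_{i} = d_{i}$), giving $\sum_{i} f_{i}m_{i} \leq \frac{1}{4}\sum_{i} d_{i}^{2}$; combined with $\sum_{i} f_{i}^{2} = 2d + 4$ and $\sum_{i} d_{i}^{2} \geq 4\sum_{i} a_{i}^{2} = 8d - 8$ this yields
\[ \sum_{i} m_{i}^{2} = \sum_{i} d_{i}^{2} - \sum_{i} f_{i}^{2} - 2\sum_{i} f_{i}m_{i} \geq \tfrac{1}{2}\sum_{i} d_{i}^{2} - (2d+4) \geq 2d - 8. \]
So two ingredients are missing from your argument: the exact computation $\sum_{i} f_{i}^{2} = 2d + 4$ via Riemann--Roch on the Kummer K3 (rather than a nonnegativity statement for $\hat{F}^{2}$), and the sharper AM--GM control of the cross terms $f_{i}m_{i}$, whose strength grows with $\sum_{i} d_{i}^{2}$ in just the way needed to recover the constant $8$.
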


\begin{proof}

The idea here is to use the Kummer construction to push our fixed curve $F$ onto a K3 surface and apply Riemann-Roch. This is analagous to a proof of Bauer's in \cite[Theorem 6.1]{Bauer99}. Consider the smooth Kummer K3 surface $K$ associated to $A$:
\begin{center}
\begin{tikzcd}
E \arrow[r, phantom, "\subset"] &[-0.3in] \hat{A} \arrow[d, swap, "\pi"] \arrow[r, "\gamma"] & \hat{A}/\{ 1, \sigma \} \arrow[r, phantom, "\eqqcolon"] &[-0.3in] K \\
Z \arrow[r, phantom, "\subset"] & A & &
\end{tikzcd}
\end{center}
where $\pi$ is the blow-up of $A$ along the collection of two-torsion points $Z$. Since the points in $Z$ are $\iota$-invariant, $\iota$ lifts to an involution $\sigma$ on $\hat{A}$ and the quotient $K$ is a smooth K3 surface. Let $E_{i}$ denote the exceptional curve over $p_{i} \in Z$, so that $E = \sum_{i=1}^{16}E_{i}$ is the exceptional divisor of $\pi$. Since $F$ is symmetric, its strict transform
\[ \hat{F} = \pi^{\ast}F - \sum_{i=1}^{16} f_{i} E_{i}, \]
descends to an irreducible curve $\bar{F} \subset K$. We claim that
\[ h^{0}(K, \O_{K}(\bar{F})) = 1. \]
In fact, if the linear system $\abs{\O_{K}(\bar{F})}$ were to contain a pencil, then this would give us a pencil of symmetric curves in $\abs{\O_{A}(F)}$ with the same multiplicities at the two-torsion points, which contradicts $F$ being a fixed component of $\d$.

From the exact sequence $0 \rightarrow \O_{K}(-\bar{F}) \rightarrow \O_{K} \rightarrow \O_{\bar{F}} \rightarrow 0$, it follows that $H^{i}(K, \O_{K}(\bar{F})) = 0$ for $i > 0$, so by Riemann-Roch
\[ 1 = h^{0}(K, \O_{K}(\bar{F})) = \chi(\O_{K}, \O_{K}(\bar{F})) = \frac{1}{2} (\bar{F})^{2} + 2 \]
and therefore $(\bar{F})^{2} = -2$. On the other hand, the equality
\[ -4 = 2 (\bar{F})^{2} = (\pi^{\ast}\bar{F})^{2} = (\hat{F})^{2} = F^{2} - \sum_{i=1}^{16}f_{i}^{2} = 2d - \sum_{i=1}^{16} f_{i}^{2} \]
combined with $\sum_{i=1}^{16} f_{i}m_{i} \leq \sum_{i=1}^{16} (\frac{d_{i}}{2})^{2}$ yields
\[ \sum_{i=1}^{16} d_{i}^{2} = \sum_{i=1}^{16} (f_{i}^{2} + m_{i}^{2} + 2 f_{i}m_{i}) \leq 2d+4 + \sum_{i=1}^{16} m_{i}^{2} + \frac{1}{2} \sum_{i=1}^{16} d_{i}^{2}. \]
After rearranging the terms, we find that
\[ \sum_{i=1}^{16} m_{i}^{2} \geq -2d - 4 + \frac{1}{2} \sum_{i=1}^{16} d_{i}^{2} \geq -2d - 4 + 2 \sum_{i=1}^{16} a_{i}^{2} \geq 2d - 8 \]
for a general divisor $D = F + M \in \d$, which is the desired inequality. \qedhere

\end{proof}

As an immediate consequence:

\begin{thm}\label{thm:FC8}
Assume $\d$ has a fixed component $F \not= 0$, and let $\b = \d - F$ be the linear system defining $\varphi: A \dashrightarrow S \subseteq \P^{N}$. Then
\[ \deg \varphi \cdot \deg S \leq 8. \]
\end{thm}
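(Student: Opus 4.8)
The plan is to run exactly the argument of Lemma~\ref{lem:NFC8}, but applied to the movable part $\b$ in place of $\d$, and then simply feed in the lower bound supplied by Proposition~\ref{prop:E}. The key observation is that stripping off the fixed component $F$ does not change the rational map: $\varphi$ is defined by the system $\b = \d - F$, whose general member is $M \sim_{alg} L$ with multiplicity $m_{i} = \mult_{p_{i}} M$ at each two-torsion point. In particular $\deg\varphi$ and $\deg S$ are computed entirely from $\b$, and $\b$ has no fixed components by construction.

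First I would blow up $A$ along $Z$ as in Lemma~\ref{lem:NFC8}, forming $\hat{A} = \Bl_{Z} A$ with exceptional divisors $E_{i}$ and the induced map $\psi: \hat{A} \dashrightarrow S$. The strict transform of a general $M \in \b$ is
\[ \hat{M} = \pi^{\ast} M - \sum_{i=1}^{16} m_{i} E_{i}, \]
and since $\b$ has no fixed components, neither does the linear system spanned by the $\hat{M}$, so its indeterminacy locus is finite. Hence Lemma~\ref{lem:S} applies to $\psi$ and gives
\[ \deg\varphi \cdot \deg S = \deg\psi \cdot \deg S \leq \hat{M}^{2} = M^{2} - \sum_{i=1}^{16} m_{i}^{2} = 2d - \sum_{i=1}^{16} m_{i}^{2}, \]
where I have used $M \sim_{alg} L$ together with $\NS(A) = \Z[L]$ to conclude $M^{2} = L^{2} = 2d$.

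Finally, substituting the estimate $\sum_{i=1}^{16} m_{i}^{2} \geq 2d - 8$ of Proposition~\ref{prop:E} yields
\[ \deg\varphi \cdot \deg S \leq 2d - (2d - 8) = 8, \]
which is the desired inequality.

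Since all of the substantive work has already been carried out in Proposition~\ref{prop:E}, I do not expect any serious obstacle here; the only points requiring care are (i) confirming that passing from $\d$ to its movable part $\b$ genuinely leaves $\varphi$, and hence $\deg\varphi$ and $\deg S$, unchanged, and (ii) verifying that the strict transform system on $\hat{A}$ has finite base locus so that Lemma~\ref{lem:S} is legitimately applicable to $\psi$.
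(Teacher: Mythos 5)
Your proposal is correct and is essentially identical to the paper's own proof: the paper likewise reuses the blow-up argument of Lemma~\ref{lem:NFC8} with $M$ in place of $D$, obtaining $\deg\varphi\cdot\deg S \leq M^{2} - \sum_{i=1}^{16} m_{i}^{2} \leq 2d - (2d-8) = 8$ via Proposition~\ref{prop:E}. The only difference is that you spell out the intermediate steps (the strict transform $\hat{M}$ and the use of $M \sim_{alg} L$ to get $M^{2} = 2d$) which the paper leaves implicit.
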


\begin{proof}
As we saw in the proof of Lemma~\ref{lem:NFC8},
\begin{equation}\label{eq:M}
\deg \varphi \cdot \deg S \leq M^{2} - \sum_{i=1}^{16} m_{i}^{2} \leq 2d - (2d - 8) = 8. \qedhere
\end{equation}
\end{proof}

\begin{cor}\label{cor:SQ}
There exists a 4-to-1 rational map $\varphi: A \dashrightarrow \P^{2}$.
\end{cor}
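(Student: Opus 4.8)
The plan is to construct the degree $4$ covering by projecting $S$ linearly onto $\P^2$ and arranging the degree of the composite to be exactly $4$. Combining Lemma~\ref{lem:NFC8} and Theorem~\ref{thm:FC8}, in either case (fixed component or not) we have $\deg\varphi\cdot\deg S\le 8$, and by Proposition~\ref{prop:S} together with the standard bound $\deg S\ge\codim_{\P^N}S+1=N-1\ge 2$ for a nondegenerate surface, the pair $(\deg\varphi,\deg S)$ is severely constrained. Recall that a general linear projection $\P^N\dashrightarrow\P^2$ with center a linear space $\Lambda$ of dimension $N-3$ restricts on $S$ to a dominant rational map of degree $\deg S-\operatorname{length}(S\cap\Lambda)$ (a general fiber is an $(N-2)$-plane through $\Lambda$ meeting $S$ in $\deg S$ points, of which those on $\Lambda$ are fixed); composing with $\varphi$ then yields a map $A\dashrightarrow\P^2$ of degree $\deg\varphi\cdot\bigl(\deg S-\operatorname{length}(S\cap\Lambda)\bigr)$.

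The key structural input is that $\deg\varphi$ is even. By Remark~\ref{rem:K} every section in $V$, hence every section defining $\varphi$, descends from the singular Kummer surface $A/\iota$, so $\varphi$ factors as $A\xrightarrow{\,q\,}A/\iota\dashrightarrow S$ with $\deg q=2$; hence $\deg\varphi\in 2\Z$. With $\deg S\ge 2$ and $\deg\varphi\cdot\deg S\le 8$ this leaves only $\deg\varphi\in\{2,4\}$, and unwinding the inequality $\deg S\ge N-1$ shows the possibilities are: $\deg\varphi=4$ with $S$ a quadric surface in $\P^3$; or $\deg\varphi=2$ with $\deg S\in\{2,3,4\}$ and $N\le\deg S+1$.

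It then remains to choose $\Lambda$ so that the composite has degree exactly $4$. If $\deg\varphi=4$ then $S\subset\P^3$ is a quadric and projection from a general point of $S$ is birational onto $\P^2$, giving degree $4\cdot 1=4$. If $\deg\varphi=2$ I want a projection of degree $\deg S-2$, i.e.\ a center meeting $S$ in length $\deg S-2$. When $\deg S\le N$ this is produced by a general center through $\deg S-2$ general smooth points of $S$, each of which lowers the degree by one; since $\deg S-2\le N-2=\dim\Lambda+1$ there is room to impose these points, and a general position argument shows no further intersection with $S$ occurs. The single exceptional configuration is $\deg S=4$, $N=3$: here $S\subset\P^3$ is a quartic birational to $A/\iota$, and a smooth point only drops the degree to $3$. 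This is exactly where the hypothesis $a_{15}=a_{16}=0$ enters: the two-torsion points $p_{15},p_{16}$ carry no imposed vanishing, so the corresponding nodes of the Kummer surface survive as ordinary double points of $S$, and projecting from such a node drops the degree by its multiplicity $2$, yielding a map $S\dashrightarrow\P^2$ of degree $4-2=2$ and a composite of degree $2\cdot 2=4$.

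The main obstacle is the control of the base behaviour of the projection, and especially the verification in the quartic case. I must confirm that $S$ really acquires a genuine double point at the image of $p_{15}$ (or $p_{16}$) --- rather than a smooth point, or one already resolved away by the linear system --- and that projection from it is generically finite of degree exactly $2$ onto $\P^2$. More generally, the delicate step in the smooth-point cases is to rule out unexpected secants or tangencies that would make $\operatorname{length}(S\cap\Lambda)$ exceed the prescribed value, which is a general position statement that has to be checked against the specific Kummer-type geometry of $S$. Granting these points, the construction produces a dominant rational map $A\dashrightarrow\P^2$ of degree exactly $4$, as claimed.
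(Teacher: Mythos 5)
Your overall strategy --- bounding $\deg\varphi\cdot\deg S\le 8$, using Remark~\ref{rem:K} to force $\deg\varphi$ even, and then finishing by linear projection --- is the paper's strategy, and your handling of the low-degree cases is a legitimate (in fact slightly more economical) variant: where the paper excludes $\{\deg\varphi=2,\ \deg S=2,3\}$ outright via Lemma~\ref{lem:NMT} and the rationality of quadrics and cubics, you simply observe that in those cases a projection from a center through $\deg S-2$ general smooth points already yields a degree~$4$ composite. Since the corollary only asserts existence of a degree~$4$ map, nothing is lost there.

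The genuine gap is exactly the one you flag: in the configuration $\deg\varphi=2$, $\deg S=4$, $N=3$, your construction stands or falls on $S$ having a double point at $\varphi(p_{15})$ or $\varphi(p_{16})$, and ``no vanishing was imposed at $p_{15},p_{16}$'' does not by itself deliver this --- a priori $p_{15}$ could be an unexpected base point of $\d$ (forcing $d_{15}\ge 2$ by evenness), or $E_{15}$ could fail to be contracted, or its image could be a smooth point. The missing idea is the rigidity coming from the extremal case: $\deg\varphi\cdot\deg S=8$ forces equality at every step of~\eqref{eq:D} (resp.~\eqref{eq:M}), which simultaneously (i) pins down $d_i=2a_i$ for all $i$, so in particular $d_{15}=d_{16}=0$ (resp.\ $m_{15}=m_{16}=0$), and (ii) shows that $\psi$ is a base-point-free morphism on $\Bl_Z A$, so that $\hat D\cdot E_{15}=0$ really does contract $E_{15}$ and $E_{16}$. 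One then factors $\psi$ through the smooth Kummer K3 surface $K$: since $\deg\gamma=2=\deg\psi$, the induced map $\alpha\colon K\to S$ is a \emph{birational} morphism contracting the $(-2)$-curves $G_{15},G_{16}$, whose images are therefore rational double points of the quartic $S$ --- this is what licenses the drop of the projection degree from $4$ to $2$. Without this equality-case analysis your argument in the only genuinely hard case is an assertion rather than a proof; with it, the rest of your write-up (including the general-position checks for the centers of projection, which are standard) goes through.
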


\begin{proof}

Recall that we chose the $a_{i}$ so that $a_{15} = 0 = a_{16}$. From Remark~\ref{rem:K}, it follows that $\varphi: A \dashrightarrow S \subset \P^{N}$ factors through the quotient $A \rightarrow A/\iota$, so $\deg \varphi$ must be even. The surface $S$ is nondegenerate, so $\deg S \geq 2$. By Lemma~\ref{lem:NMT} below, it is impossible for $S$ to be rational together with $\deg \varphi = 2$, so $\{ \deg \varphi = 2, \deg S = 2, 3 \}$ is ruled out by the classification of quadric and cubic surfaces (using the fact that $\rho(A) = 1$).

Together with the upper bound $\deg \varphi \cdot \deg S \leq 8$ given by Lemma~\ref{lem:NFC8} and Theorem~\ref{thm:FC8}, there are two possibilities:
\[ \{ \deg \varphi = 2, \deg S = 4 \} \quad \text{and} \quad \{ \deg \varphi = 4, \deg S = 2 \}. \]
Either of these imply equality throughout~\eqref{eq:D} or~\eqref{eq:M}, so that there is a morphism $\Bl_{Z}A \rightarrow S$ which fits into the diagram:
\begin{center}
\begin{tikzcd}
E_{i} \arrow[r, phantom, "\subset"] &[-0.3in] \Bl_{Z}A \arrow[d, swap, "\pi"] \arrow[r, "\gamma"] \arrow[rd] & K \arrow[d, "\alpha"] \arrow[r, phantom, "\supset"] &[-0.3in] G_{i} \\
& A \arrow[r, dashed, swap, "\varphi"] & S \arrow[r, phantom, "\subset"] & \P^{3}
\end{tikzcd}
\end{center}
where $K$ is the smooth Kummer K3 surface, $\gamma$ is a branched cover of degree 2, and $G_{i} \coloneqq \gamma(E_{i})$.

In the first case where $\deg \varphi = 2$ and $\deg S = 4$, from~\eqref{eq:D} and~\eqref{eq:M} it follows that $d_{15} = 0 = d_{16}$ or $m_{15} = 0 = m_{16}$. This implies that the curves $G_{15}, G_{16}$ are contracted and their images $q_{15}, q_{16}$ under $\alpha$ are double points on $S$ since $\alpha$ is a birational morphism. Projection from a general $(N-3)$-plane containing one but not both of the $q_{i}$ defines a rational map $A \dashrightarrow \P^{2}$ of degree 2 (if $q_{15}$ is a cone point of $S$, pick a general plane passing through $q_{16}$, and vice versa). In the second case where $\deg \varphi = 4$ and $\deg S = 2$, note that $S$ is rational. \qedhere

\end{proof}

This immediately leads to Theorem~\ref{thm:MT}. It is natural to ask what $\irr(A_{d})$ is equal to for a very general polarized abelian surface. At least one can see geometrically:

\begin{lem}\label{lem:NMT}
There are no rational dominant maps $A \dashrightarrow \P^{2}$ of degree 2.
\end{lem}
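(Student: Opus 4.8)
The plan is to argue by contradiction, exploiting the fact that a degree $2$ map is automatically Galois together with the Hodge-theoretic rigidity of an abelian surface. Suppose $f \colon A \dashrightarrow \P^{2}$ is a dominant rational map of degree $2$. Since $[K(A):K(\P^{2})] = 2$ and we are in characteristic zero, the extension is Galois, so there is a nontrivial birational involution $\tau \colon A \dashrightarrow A$ with $f \circ \tau = f$; concretely $f$ factors as $A \to A/\tau \dashrightarrow \P^{2}$ with the second map birational, so a smooth model of $A/\tau$ is rational. Because $A$ is a minimal surface of Kodaira dimension $0$, the birational involution $\tau$ is in fact a biregular automorphism of $A$ of order $2$.

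Next I would track the holomorphic $2$-form. Let $\omega$ span $H^{0}(A, \Omega_{A}^{2}) \cong \C$. Since $\tau$ has order $2$, it acts on this line by $\pm 1$. If $\tau^{\ast}\omega = \omega$, then $\omega$ is $\tau$-invariant and descends to a nonzero holomorphic $2$-form on a smooth model of the quotient $A/\tau$ -- the quotient singularities here are rational double points, over which invariant $2$-forms extend -- contradicting that this model is rational (for which $p_{g} = 0$). Hence $\tau^{\ast}\omega = -\omega$.

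I would then translate this sign into a statement about the linear part of $\tau$. Writing $\tau(x) = \alpha(x) + b$ with $\alpha \in \Aut(A,0)$ and $\alpha^{2} = \Id$, translations act trivially on $\Omega_{A}^{2}$, so $\tau^{\ast}$ acts on $\omega$ by $\det(\alpha|_{T_{0}A})$. Thus $\tau^{\ast}\omega = -\omega$ forces $\det \alpha = -1$; in particular $\alpha \neq \pm\Id$, and the eigenvalues of $\alpha$ on the tangent space $T_{0}A$ are $+1$ and $-1$. The corresponding eigenspace decomposition of $\Lambda \otimes \Q$ (where $A = T_{0}A / \Lambda$) is defined over $\Q$ since $\alpha^{2} = \Id$, and as the real eigenvalues are $+1,+1,-1,-1$ the two rational eigenspaces each have dimension $2$, splitting $A$ up to isogeny as a product $E_{+} \times E_{-}$ of two elliptic curves. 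But then $\NS(A)$ contains the two independent classes of the fibres of the projections, so $\rho(A) \geq 2$, contradicting $\rho(A) = 1$.

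The delicate point is the middle step -- converting the rationality of the quotient into the clean statement $\tau^{\ast}\omega = -\omega$. This requires knowing that the singularities of $A/\tau$ (whether $\tau$ is fixed-point free, or of inversion type with $16$ fixed points) impose no obstruction to extending an invariant $2$-form, i.e. that $p_{g}$ of a resolution equals $\dim H^{0}(A,\Omega^{2})^{\tau}$; equivalently one phrases the argument as: a rational surface carries no nonzero holomorphic $2$-form, while the invariant part $H^{0}(A,\Omega^{2})^{\tau}$ would furnish one. Once the sign is pinned down, the remainder is the standard fact that an order-$2$ automorphism with $\det = -1$ on the tangent space decomposes the surface up to isogeny, which is incompatible with $\rho = 1$.
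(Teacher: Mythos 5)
Your proof is correct, but it takes a genuinely different route from the paper's. The paper argues via the Hilbert scheme of two points: the fibers of a degree-2 map $f: A \dashrightarrow \P^{2}$ define a generically injective map $g: \P^{2} \dashrightarrow A^{[2]}$; composing with the summation map $\Sigma: A^{[2]} \to A$ gives a rational map from $\P^{2}$ to an abelian variety, which extends to a morphism and is therefore constant, so the image of $g$ lies in a fiber $\Sigma^{-1}(0)$ --- the Kummer K3 surface --- and $\P^{2}$ would be birational to a K3, a contradiction. You instead extract the Galois involution $\tau$ of the degree-2 field extension, upgrade it to a biregular involution of $A$, and split on the sign of $\tau^{\ast}$ on the holomorphic 2-form: the $+1$ case dies because the invariant form survives on a resolution of $A/\tau$ (the singularities are at worst $A_{1}$, and $h^{2,0}$ of a resolution of $X/G$ equals $\dim H^{0}(X,\Omega^{2}_{X})^{G}$), while the $-1$ case forces the linear part of $\tau$ to have eigenvalues $+1,-1$ on $T_{0}A$, splitting $A$ up to isogeny as a product of elliptic curves and hence giving $\rho(A) \geq 2$. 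All the steps check out, including the slightly delicate descent of the invariant 2-form. The main difference worth noting: the paper's argument never uses $\rho(A) = 1$ and so excludes degree-2 maps to $\P^{2}$ for \emph{every} abelian surface, whereas your final step genuinely needs $\rho(A) = 1$ (legitimate here, since it is a standing hypothesis). Your route is the more classical one --- essentially the classification of involutions on abelian surfaces by their action on the 2-form --- while the paper's is shorter but leans on the geometry of $A^{[2]}$ and the Kummer construction.
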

\begin{proof}
Suppose there exists such a map. We have the following diagram
\begin{center}
\begin{tikzcd}
& & A^{[2]} \arrow[r, "\Sigma"] &[-0.2in] A \\
A \arrow[r, dashed, "f"] & \P^{2} \arrow[r, dashed, swap, "h"] \arrow[ur, dashed, "g"] & K^{[2]}(A) \arrow[u, hook] \arrow[r, phantom, "\eqqcolon"] & \Sigma^{-1}(0)
\end{tikzcd}
\end{center}
where $g$ is the pullback map on 0-cycles and $A^{[2]}$ is the Hilbert scheme of 2 points on $A$. Since the rational map $\Sigma \circ g$ can be extended to a morphism, it must be constant. So $\overline{\Im(g)}$ is contained in a fiber $\Sigma^{-1}(0)$, which is a smooth Kummer K3 surface $K^{[2]}(A)$. Since $g$ is injective, it descends to an injective (and hence birational) map $h: \P^{2} \dashrightarrow K^{[2]}(A)$, yielding a contradiction.
\end{proof}

%
%

\end{document}